\newenvironment{proof}[1][]{\textit{Proof#1}. }{\qed\par}
\newcommand{\vertex}[2]{\cnode*(#2){4pt}{#1}}
\newcommand{\svertex}[2]{\cnode*(#2){3pt}{#1}}
\newcommand{\edge}[2]{\ncline[nodesep=0pt]{-}{#1}{#2}}
\def\ps@copyright{\let\@mkboth\@gobbletwo
  \def\@oddhead{}%
  \let\@evenhead\@oddhead
  \def\@oddfoot{\small\slshape
    \hbox{}\hfil\@date\/}%
  \let\@evenfoot\@oddfoot
}
\begin{document}

\begin{frontmatter}

\title{Semiregular Trees with Minimal Index}

\author[IU]{T\"urker B{\i}y{\i}ko\u{g}lu\thanksref{tubitak}} and
\ead{turker.biyikoglu@isikun.edu.tr}
\author[WU]{Josef Leydold\corauthref{cor}}
\ead{Josef.Leydold@wu.ac.at}
\ead[url]{http://statmath.wu.ac.at/\~{}leydold/}

\address[IU]{Department of Mathematics,
  I\c{s}{\i}k University,
  \c{S}ile 34980, Istanbul, Turkey}
\address[WU]{Department of Statistics and Mathematics,
  WU (Vienna University of Economics and Business),
  Augasse 2-6, A-1090 Wien, Austria}

\corauth[cor]{Corresponding author. Tel +43 1 313 36--4695. FAX +43 1 313 36--738}

\thanks[tubitak]{The first author is supported by Turkish Academy of
  Sciences through Young Scientist Award Program
  (T\"{U}BA-GEB\.{I}P/2009).}


\begin{keyword}
  adjacency matrix \sep
  eigenvectors \sep
  spectral radius \sep
  Perron vector \sep
  tree

  \MSC 05C35 \sep 05C75 \sep 05C05 \sep 05C50
\end{keyword}

\begin{abstract}
  A semiregular tree is a tree where all non-pendant vertices have the
  same degree. Belardo et al.\ (MATCH Commun. Math. Chem.~61(2),
  pp.~503--515, 2009) have shown that 
  among all semiregular trees with a fixed order and degree, a graph
  with index is a caterpillar. In this technical report we provide a
  different proof for this theorem. Furthermore, we give
  counter examples that show this result cannot be generalized to the
  class of trees with a given (non-constant) degree sequence.
\end{abstract}

\end{frontmatter}


\markboth{T.~B{\i}y{\i}ko\u{g}lu and J.~Leydold}{%
  Semiregular Trees with Minimal Spectral Radius}


\section{Introduction}

Let $G(V,E)$ be a simple connected undirected graph with vertex set
$V(G)$ and edge set $E(G)$. The spectral radius or \emph{index} of $G$
is the largest eigenvalue of its adjacency matrix $A(G)$ of $G$.
It is well known that a tree with given order has maximal index
radius if and only if it is a star, and it has minimal index
if and only if it is a path.
However, it has only recently been shown that within the class of
trees with a given degree sequence, extremal graphs have a ball-like
structure where vertices of highest degrees are located near the
center. Such trees can easily be found using a breadth-first search
algorithm, see \citep{Biyikoglu;Leydold:2008a}.

In this paper we are interested in trees with minimal index.
Recall that a vertex of degree 1 is called a \emph{pendant} vertex (or
\emph{leaf}) of a tree. We call a tree $G$ \emph{$d$-semiregular} when
all of its non-pendant vertices have degree $d$. 
We denote the class of $d$-semiregular trees with $n$ vertices by
$\mathcal{T}_{d,n}$. Note that this class is non-empty only if 
$n \equiv 2 \mod (d-1)$. 
We assume throughout the paper that $d\geq 3$ (otherwise
$G\in\mathcal{T}_{2,n}$ is simply a path with $n$ vertices). 
Recall that a \emph{caterpillar} is a tree where the subtree induced
by all of its non-pendant vertices is a path.
We denote the uniquely defined caterpillar in $\mathcal{T}_{d,n}$ by
$C_{d,n}$.
Recently \citet{Belardo;Marzi;Simic:2009a} have investigated
d-semiregular trees with small index. They characterized
all $d$-semiregular trees with given order that have minimal index.

\begin{thm}[\citep{Belardo;Marzi;Simic:2009a}]
  \label{thm:min-adj}
  A tree $G$ has smallest index in class $\mathcal{T}_{d,n}$ if and
  only if it is a caterpillar $C_{d,n}$.
\end{thm}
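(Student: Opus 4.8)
The plan is to use an eigenvector exchange argument based on the Perron vector of the adjacency matrix. Let $G \in \mathcal{T}_{d,n}$ be a tree with minimal index $\lambda = \lambda(G)$ and let $f$ be its Perron vector, i.e.\ the positive eigenvector of $A(G)$ with $A(G)f = \lambda f$. Since $\lambda$ is minimal, $G$ should admit no local rearrangement of edges that preserves membership in $\mathcal{T}_{d,n}$ and strictly decreases the index. The key tool is the standard fact that if one moves a branch from a vertex $u$ to a vertex $v$ with $f(v) \geq f(u)$ (and the move yields a new tree in the same class), then the index does not increase, and it strictly decreases unless $f(v) = f(u)$ together with some rigidity condition. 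I will first record this switching lemma carefully, stating the equality case, since the whole argument turns on it.

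The second step is to understand the structure forced by minimality. Consider the subtree $T^\circ$ induced by the non-pendant vertices of $G$; because $G$ is $d$-semiregular, every vertex of $T^\circ$ has degree $d$ in $G$, so its degree in $T^\circ$ is between $1$ and $d$, and the pendant vertices of $G$ hang off the vertices of $T^\circ$. The claim is that minimality forces $T^\circ$ to be a path, which is exactly the caterpillar condition. Suppose not; then $T^\circ$ has a vertex $w$ of degree $\geq 3$ in $T^\circ$, hence a "branch vertex.'' I would then locate, using the Perron vector, a pendant-heavy end of the tree and a branch that can be relocated toward the part of the tree where $f$ is largest, contradicting minimality unless $T^\circ$ is already a path. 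Concretely: take a vertex where $f$ attains its minimum over $V(T^\circ)$ — intuitively an endpoint of a longest path in $T^\circ$ — and show that if $T^\circ$ is not a path one can detach a suitable subtree at a branch vertex and reattach it at (or near) that minimizing vertex so as to reduce, or at least not increase while breaking a tie in our favor, the spectral radius.

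The third step is the uniqueness part: once $T^\circ$ is known to be a path $P_k$, the number $k$ of internal vertices is determined by $n$ and $d$ (each of the $k$ internal vertices carries $d - 2$ pendant neighbors except the two ends which carry $d-1$, giving $n = k + k(d-2) + 2 = k(d-1) + 2$), and the distribution of the remaining leaves among the internal vertices is forced — the caterpillar $C_{d,n}$ is the \emph{unique} element of $\mathcal{T}_{d,n}$ whose internal subtree is a path, so there is nothing left to optimize. Hence $G = C_{d,n}$, and conversely $C_{d,n}$ attains the minimum.

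I expect the main obstacle to be the second step, and specifically the equality case of the switching lemma: a crude branch-moving argument only shows $\lambda$ does not increase, so one must argue that a genuine strict decrease is available whenever $T^\circ$ deviates from a path. Handling this typically requires either choosing the branch to move with care (so that the Perron-vector inequality at the two attachment points is strict), or, when every candidate move only achieves equality, deriving a contradiction from the eigenvalue equation $\lambda f(v) = \sum_{u \sim v} f(u)$ at the vertices involved — showing that equality would force two non-isomorphic trees to share the same Perron vector, which is impossible. Making this rigidity argument clean, rather than checking many cases by hand, will be the crux; the remaining bookkeeping about degrees and leaf counts is routine.
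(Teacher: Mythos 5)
There is a genuine gap, and it is one of orientation: your switching machinery points the wrong way for a \emph{minimality} proof. The standard Perron-vector exchange lemma says that if $f$ is the Perron vector of $G$ and you move a branch from $u$ to a vertex $v$ with $f(v)\geq f(u)$, then the Rayleigh quotient of the transported vector on the new graph $G'$ does not decrease, hence $\lambda(G')\geq \mathcal{R}_{G'}(f)\geq \mathcal{R}_G(f)=\lambda(G)$ --- the index does not \emph{decrease} (you state the opposite). More importantly, the Rayleigh quotient only bounds the largest eigenvalue from \emph{below}, so no choice of move, evaluated with the Perron vector of $G$, can ever certify $\lambda(G')<\lambda(G)$: moving a branch toward a low-$f$ vertex makes $\mathcal{R}_{G'}(f)$ drop below $\lambda(G)$, but $\mathcal{R}_{G'}(f)\leq\lambda(G')$ tells you nothing about how $\lambda(G')$ compares with $\lambda(G)$. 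Consequently your plan --- take a minimal $G$ with a branching vertex in $T^\circ$, find a relocation that ``reduces, or at least does not increase while breaking a tie in our favor, the spectral radius,'' and contradict minimality --- cannot be executed with this tool. (It is the right plan for \emph{maximality} problems.)

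The paper resolves this by anchoring the argument at the candidate extremal graph rather than at $G$: it takes the Perron vector $f_0$ of the caterpillar $C_{d,n}$, proves it is unimodal about the center, and then transports it to an arbitrary $G\in\mathcal{T}_{d,n}$ through a sequence of edge switchings (inverse ``branch reductions''), each of which does not decrease the Rayleigh quotient. This yields $\mu(G)\geq\mathcal{R}_G(f)\geq\mathcal{R}_{C_{d,n}}(f_0)=\mu(C_{d,n})$, with strictness extracted from the equality cases of the switching lemma. If you want to salvage your outline, you must either (i) reverse the roles --- start from $C_{d,n}$ and its Perron vector and push outward to $G$, which also forces you to control unimodality of the test function and the order of the switchings (the delicate part of the paper's Lemmata~\ref{lem:G1} and~\ref{lem:Gi}), or (ii) replace the Rayleigh-quotient lemma by a genuinely two-sided tool such as characteristic-polynomial comparisons. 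Your step three (uniqueness of the caterpillar within $\mathcal{T}_{d,n}$ once $T^\circ$ is a path) is correct and routine, as you say.
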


In this technical report we give a different proof for this theorem
based on local perturbations of trees and inequalities between the
corresponding Rayleigh quotients. We have already used this approach
to show the analogous results for the Laplacian spectral radius of
semiregular trees, see \citep{Biyikoglu;Leydold:2009c}.
The presented proof is essentially the same but with the eigenvalue
equation and the Rayleigh quotient for the adjacency matrix instead of
that for the Laplacian.

If the given degree sequence is not constant, then the structure of
extremal trees is more complicated. Section~\ref{sec:strangeexamples}
gives an example of an extremal graph that is not a caterpillar.


\section{Proof of Theorem~\ref{thm:min-adj}}
\label{sec:proof-adj}

Let $\mu(G)$ denote the largest eigenvalue of $A(G)$. As $G$ is
connected, $A(G)$ is irreducible and thus $\mu(G)$ is simple and there
exists a unique positive eigenvector $f_0$ with $||f_0||=1$ by the
Perron-Frobenius Theorem (see, e.g., \citep{Horn:1990a}). 
We refer to such an eigenvector as the \emph{Perron vector} of $G$.
Remind that $f_0$ fulfills the eigenvalue equation
\begin{equation}
  \label{eq:eveq}
  \mu f_0(v) = \sum_{uv\in E} f_0(u)\;.
\end{equation}
Moreover, by the Rayleigh-Ritz Theorem $f_0$ maximizes the Rayleigh
quotient for non-zero vectors $f$ on $V(G)$ defined as 
\begin{equation}
  \label{eq:rayleigh}
  \mathcal{R}_G(f)
  = \frac{\langle A f,f\rangle}{\langle f,f \rangle}
  = \frac{\sum_{v\in V} f(v) \sum_{uv\in E} f(u)}{\sum_{v\in V} f(v)^2}
  = \frac{2\sum_{uv\in E} f(u)f(v)}{\sum_{v\in V} f(v)^2}\;.
\end{equation}
In particular, for any positive function $f$ with $||f||=1$ we find
\begin{equation}
  \mu(G) = 2\sum_{uv\in E} f_0(u)f_0(v) \geq 2\sum_{uv\in E} f(u)f(v)
\end{equation}
where equality holds if and only if $f=f_0$.
Recall that $\mu(G)>1$ if $G\not=K_1,K_2$ and that every pendant vertex of
$G$ is a strict local minimum of $f_0$.

We use the following approach for proving Theorem~\ref{thm:min-adj}:
For any tree $G$ in $\mathcal{T}_{d,n}$ we construct a positive
function $f$ such that
$\mathcal{R}_G(f)\geq\mathcal{R}_{C_{d,n}}(f_0)$
where $f_0$ denotes the Perron vector of the caterpillar
$C_{d,n}$. Then we find
$\mu(G)\geq\mathcal{R}_G(f)\geq\mathcal{R}_{C_{d,n}}(f_0)=\mu(C_{d,n})$
and we are done when either one of the inequalities is strict or $f$
does not fulfill the eigenvalue equation~(\ref{eq:eveq}).
Vector $f$ is constructed by starting with Perron vector $f_0$ on
$C_{d,n}$ and rearranging the edges of the caterpillar until we
arrive at $G$. $f$ and $f_0$ have then the same valuations but
different Rayleigh quotients.

First we summarize the notion used for our construction:
We write $u\sim v$ if the vertices $u$ and $v$ are adjacent, i.e., if
$uv\in E(G)$.
$d_G(v)$ denotes the degree of $v$ in $G$, while
$d^\star_G(v)$ is the number of non-pendant vertices that are
adjacent to $v$.
For two adjacent non-pendant vertices $v\sim u$ the 
\emph{branch} $B_{vu}$ is the subtree induced by $v$ and
all vertices of the component of $G\setminus\{vu\}$ that contains
$u$. The length $\ell(B_{vu})$ of a branch is the number of its
non-pendant vertices.
We call a vertex $v$ with $d^\star_G(v)\geq 3$ a
\emph{branching point} of $G$, and a non-pendant vertex $v$ with
$d^\star_G(v)=1$ a \emph{bud} of $G$. 
We call a branch with exactly one branching point $v^\ast$ (and
exactly one bud vertex) a \emph{proper branch}.
A positive function $f$ on $G$ is called \emph{unimodal} with maximum
$\hat{v}$ if it is monotonically non-increasing on every
path in $G$ starting at $\hat{v}$ and non-constant except (possibly) on
just one edge incident to $\hat{v}$.

The atomic steps of our rearrangement are \emph{switching} of edges
which have already been used by various authors, e.g.,
\citep{Rowlinson:1991a}:
Let $P$ be the path $u_1^\circ v_1\dots v_2u_2$ in
$G\in\mathcal{T}_{d,n}$ where $u_1^\circ$ is a pendant vertex,
$d^\star_G(u_2)\geq 2$ and $v_1\not=v_2$.
Then we get a new tree $G'\in\mathcal{T}_{d,n}$
by replacing edges $v_1u_1^\circ$ and $v_2u_2$ by the respective 
edges $v_1u_2$ and $v_2u_1^\circ$, see Fig.~\ref{fig:switching}.
For a unimodal function $f$ on $G$ with $f(v_1)\geq f(v_2)$
we construct a function $f'$ on $G'$ by
$f'(u_1^\circ)=\min(f(u_1^\circ),f(u_2))$,
$f'(u_2)=\max(f(u_1^\circ),f(u_2))$, and
$f'(x)=f(x)$ for all other vertices.
Notice that switching does not change the number of pendant and
non-pendant vertices.

\begin{figure}[t]
  \begin{tabular*}{\textwidth}{@{\extracolsep{\fill}} ccccc}
    &
    {
      \psset{xunit=15mm}
      \psset{yunit=15mm}
      \begin{pspicture}(-1,-1.3)(1.5,1.1)
        \vertex{v1}{-0.5,1}  \rput(-0.8,1){$v_1$}
        \vertex{v2}{1,1}     \rput( 1.3,1){$v_2$}
        \vertex{u1}{-0.5,0}  \rput(-0.8,0){$u_1^\circ$}
        \vertex{u2}{1,0}     \rput( 1.3,0){$u_2$}
        \vertex{w1}{0.6,-1}  \rput(0.6,-1.25){$w_1$}
        \vertex{w2}{1,-1}    \rput(1.0,-1.25){$w_2$}
        \vertex{w3}{1.4,-1}  \rput(1.4,-1.25){$w_3$}
        \ncline[linestyle=dashed,nodesep=0pt]{-}{v1}{v2}
        \edge{v1}{u1}
        \edge{v2}{u2}
        \edge{u2}{w1}
        \edge{u2}{w2}
        \edge{u2}{w3}
      \end{pspicture}
    }
    &
    &
    {
      \psset{xunit=15mm}
      \psset{yunit=15mm}
      \begin{pspicture}(-0.5,-1.3)(1.5,1.1)
        \vertex{v1}{-0.5,1}  \rput(-0.8,1){$v_1$}
        \vertex{v2}{1,1}     \rput( 1.3,1){$v_2$}
        \vertex{u1}{-0.5,0}  \rput(-0.8,0){$u_1^\circ$}
        \vertex{u2}{1,0}     \rput( 1.3,0){$u_2$}
        \vertex{w1}{0.6,-1}  \rput(0.6,-1.25){$w_1$}
        \vertex{w2}{1,-1}    \rput(1.0,-1.25){$w_2$}
        \vertex{w3}{1.4,-1}  \rput(1.4,-1.25){$w_3$}
        \ncline[linestyle=dashed,nodesep=0pt]{-}{v1}{v2}
        \edge{v1}{u2}
        \edge{v2}{u1}
        \edge{u2}{w1}
        \edge{u2}{w2}
        \edge{u2}{w3}
      \end{pspicture}
    }
    & \\
    & $G$ && $G'$ & 
  \end{tabular*}
  \caption{Switching edges $v_1u_1^\circ$ and $v_2u_2$ with 
    edges $v_1u_2$ and $v_2u_1^\circ$.
    (Dashed lines are paths in $G$ and $G'$, respectively, and need not
    be edges. Vertices and edges that are not involved are omitted.)
  }
  \label{fig:switching}
\end{figure}
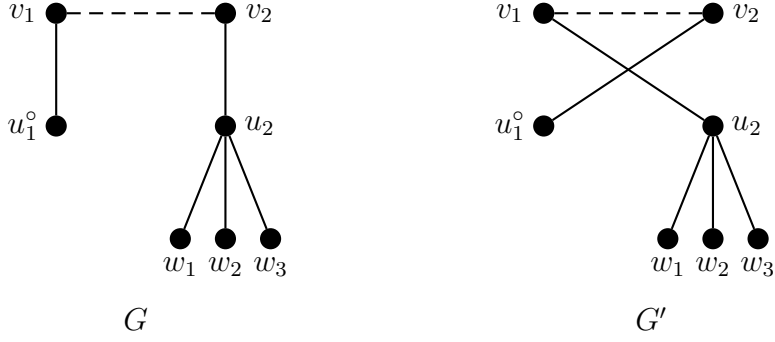

\begin{lem}
  \label{lem:rearrange}
  Let $G\in\mathcal{T}_{d,n}$ and $f$ be a unimodal function
  on $G$ with maximum $\hat{v}$. 
  Construct $G'$ and $f'$ as described above.
  If $f(v_1)\geq f(v_2)$, then $f'$ is again
  unimodal with maximum $\hat{v}$ and 
  $\mathcal{R}_{G'}(f')\geq\mathcal{R}_G(f)$.
  The inequality is strict if and only if either 
  $f(v_1)>f(v_2)$ and $f(u_1^\circ)<f(u_2)$,
  or $f(u_1^\circ)>f(u_2)$.
\end{lem}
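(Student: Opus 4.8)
The plan is to compute the difference of Rayleigh quotients directly. Since switching does not change the vertex set and $f'$ is merely a relabeling of the values of $f$ on the vertex set (the values $f(u_1^\circ)$ and $f(u_2)$ possibly get swapped, everything else stays), we have $\sum_v f'(v)^2 = \sum_v f(v)^2$. Hence comparing $\mathcal{R}_{G'}(f')$ with $\mathcal{R}_G(f)$ reduces to comparing the edge-sums $\sum_{xy\in E(G')} f'(x)f'(y)$ and $\sum_{xy\in E(G)} f(x)f(y)$. The edges common to $G$ and $G'$ contribute equally, so the whole comparison localizes to the edges that were removed and added. Writing $a = f(v_1)$, $b = f(v_2)$, $s = f(u_1^\circ)$, $t = f(u_2)$, the edges $v_1u_1^\circ$ and $v_2u_2$ in $G$ contribute $as + bt$, while the edges $v_1u_2$ and $v_2u_1^\circ$ in $G'$ contribute (using the new values) either $at + bs$ or, if a swap occurred, the same expression since $f'$ on $\{u_1^\circ,u_2\}$ is just $\{s,t\}$ relabeled and both new edges are incident to exactly one of $u_1^\circ,u_2$. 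One must also check the edges from $u_2$ to $w_1,w_2,w_3$ (and any further subtree hanging off $u_2$): in $G'$ the vertex $u_2$ may carry the value $\max(s,t)$ instead of $t$, so those edge products change too. So the net change is
\begin{equation}
  \Delta := \textstyle\sum_{xy\in E(G')} f'(x)f'(y) - \sum_{xy\in E(G)} f(x)f(y)
  = (a-b)\bigl(f'(u_2)-t\bigr) + \bigl(\text{terms from } w_i\text{ edges}\bigr)\;,
\end{equation}
and I would organize the bookkeeping so that everything collapses to $\Delta = (a-b)(\max(s,t)-t) + (\text{something})\cdot(\max(s,t)-t)$, i.e. a nonnegative factor times $\max(s,t)-t = \max(s,t)-f(u_2)\geq 0$.

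More carefully, I would split into the two cases dictated by which of $s,t$ is larger. If $s \le t$, then $f'(u_1^\circ)=s$, $f'(u_2)=t$, nothing actually changes in the valuation, and the only change is combinatorial: the edge $v_1u_2$ replaces $v_1u_1^\circ$ and $v_2u_1^\circ$ replaces $v_2u_2$, giving $\Delta = (at+bs)-(as+bt) = (a-b)(t-s) \ge 0$, with equality iff $a=b$ or $s=t$. If $s > t$, then $f'(u_1^\circ)=t$, $f'(u_2)=s$; now in $G'$ the branch structure at $u_2$ is preserved (same neighbors $w_1,w_2,w_3,\dots$ and $v_1$ in place of $v_2$), but $u_2$ carries value $s$ instead of $t$. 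Collecting: the $w_i$-edges change by $(s-t)\sum_i f(w_i)$ (summing over all neighbors of $u_2$ in $G$ other than $v_2$), the swapped pendant edge at $v_1$ and $v_2$ contributes $(as+bt)-(as+bt)=0$ after careful relabeling — I will have to track this sign with care — and one is left with a manifestly nonnegative quantity. In both cases the unimodality bound $f(v_1)\ge f(v_i)$ for $i$ along the relevant paths is what guarantees the $w_i$-terms do not spoil positivity.

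For the unimodality claim about $f'$: since the multiset of values of $f'$ equals that of $f$ and the maximum value is attained at the same vertex $\hat v$ (the swap only touches $u_1^\circ$ and $u_2$, and $f(u_2)\le f(\hat v)$, $f(u_1^\circ)\le f(\hat v)$, with strictness unless $\hat v\in\{u_1^\circ,u_2\}$, which the hypothesis $u_1^\circ$ pendant and $d^\star(u_2)\ge 2$ rules out for $u_1^\circ$ and one handles separately for $u_2$), I need to verify monotone non-increase along every path from $\hat v$ in $G'$. A path in $G'$ either avoids the two new edges, in which case it is a path in $G$ and we are done, or it crosses from $v_1$ to $u_2$ (or symmetrically $v_2$ to $u_1^\circ$). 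Along $v_1\to u_2$, monotonicity needs $f'(v_1)=f(v_1)\ge f'(u_2)=\max(f(u_1^\circ),f(u_2))$; this follows because $f(v_1)\ge f(v_2)\ge f(u_2)$ (unimodality of $f$ on the path through $v_2u_2$) and $f(v_1)\ge f(u_1^\circ)$ (unimodality on the edge $v_1u_1^\circ$), and then continue down into the old branch $B_{u_2}$ where $f'=f$ is already monotone. Along $v_2\to u_1^\circ$, monotonicity needs $f(v_2)\ge f'(u_1^\circ)=\min(f(u_1^\circ),f(u_2))$, and $f(v_2)\ge f(u_2)\ge \min(\dots)$ does it; since $u_1^\circ$ is pendant in $G'$ the path stops there. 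The ``non-constant except on one edge at $\hat v$'' clause transfers because we have not created any new equalities except possibly turning the old non-constant edge $v_1u_1^\circ$ into something, which I will check does not produce a second flat edge.

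The main obstacle I expect is the sign bookkeeping in the case $s>t$ and, more importantly, pinning down the exact equality condition. One has to be sure the claimed iff — strict unless ($f(v_1)>f(v_2)$ and $f(u_1^\circ)<f(u_2)$) or $f(u_1^\circ)>f(u_2)$ — exactly matches ``$\Delta=0$''. From the case analysis: if $s>t$ we will get $\Delta>0$ strictly (the term $(s-t)\cdot(\text{positive})$ cannot vanish since $s>t$ and the relevant valuations are positive), matching the ``$f(u_1^\circ)>f(u_2)$'' disjunct; if $s\le t$ then $\Delta=(a-b)(t-s)$, which is positive exactly when $a>b$ \emph{and} $t>s$, i.e. $f(v_1)>f(v_2)$ and $f(u_1^\circ)<f(u_2)$, and is zero when $s=t$ (even if $a>b$) or when $a=b$ — precisely the complement. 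So the stated condition is correct provided the $s>t$ computation really is strictly positive, which is where I would be most careful, using $\mu(G)>1$ and positivity of $f$ to ensure the multiplicative factors are nonzero.
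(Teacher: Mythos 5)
Your proof is correct and follows essentially the same route as the paper: the same case split on whether $f(u_1^\circ)\leq f(u_2)$ or $f(u_1^\circ)>f(u_2)$, the same localization of the Rayleigh-quotient difference to the switched edges (plus the $w_j$-edges when the values at $u_1^\circ$ and $u_2$ are swapped), yielding the factorizations $(f(v_1)-f(v_2))(f(u_2)-f(u_1^\circ))$ and $(f(u_1^\circ)-f(u_2))\sum_j f(w_j)$ with the identical strictness analysis. Your unimodality check is a more explicit version of the paper's one-line remark; the only cosmetic point is that strictness in the swap case needs positivity of $f$ and the existence of a neighbor $w_j$ (guaranteed by $d\geq 3$), not $\mu(G)>1$.
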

\begin{proof}
  Unimodality of $f$ and $f(v_1)\geq f(v_2)$ imply
  $f(v_2)>f(u_2)$ and $f(v_1)\geq f(u_1^\circ)$.
  Assume first that $f(u_1^\circ)\leq f(u_2)$.
  Then $f'(x)=f(x)$ for all $x\in V(G)$ and by switching edges 
  $v_1 u_1^\circ$ with $v_2u_2$ with $v_1u_2$ and $v_2u_1^\circ$ and
  we find (for $||f||=1$)
  \[
  \begin{split}
    \mathcal{R}_{G'}(f') - \mathcal{R}_{G}(f) 
    &= 2\sum_{xy\in E'\setminus E}f'(x)f'(y)
      - 2\sum_{uv\in E\setminus E'}f(u)f(v) \\
    &=
    2\,(f(u_1^\circ)f(v_2) + f(u_2)f(v_1) - f(u_1^\circ)f(v_1) - f(u_2)f(v_2)) \\
    &=
    2\,(f(u_1^\circ)-f(u_2))\cdot (f(v_2)-f(v_1))
    \geq 0
  \end{split}
  \]
  where the inequality is strict whenever $f(v_1)>f(v_2)$ and
  $f(u_1^\circ)<f(u_2)$.
  
  If $f(u_1^\circ) > f(u_2)$ we have
  $f'(u_1^\circ)=f(u_2)$, $f'(u_2)=f(u_1^\circ)$, and
  $f'(x)=f(x)$ otherwise.
  Let $w_j$, $j=1,\dots,d_G(u_2)-1$, be the neighbors of $u_2$ not
  equal to $v_2$. Then
  \[
  \begin{split}
    \mathcal{R}_{G'}(f') - \mathcal{R}_{G}(f) 
    &= 2\sum_{w_j}f'(u_2)f'(w_j) - 2\sum_{w_j}f(u_2)f(w_j) \\
    &= 2\sum_{w_j}(f(u_1^\circ)-f(u_2))f(w_j) 
    \geq 0
  \end{split}
  \]
  where the inequality is strict whenever $f(u_1^\circ)>f(u_2)$.

  Unimodality for $f'$ follows from the fact that monotonicity of $f$
  on paths in $G$ that start at $v_1$ or $v_2$ is preserved at the
  corresponding paths in $G'$.
\end{proof}

Now if a tree $G$ has no branching point, then it is necessarily a
caterpillar. Otherwise, there is a branching point $v^\ast$ with 
(at least) two proper branches $B_{v^\ast u_2}$ and $B_{v^\ast x_1}$,
see Fig.~\ref{fig:branchreduction}.
Let $v_2$ be the bud of $B_{v^\ast x_1}$ and $u_1^\circ\sim v_2$ a pendant
vertex. Then we can switch edges $v^\ast u_2$ and $v_2u_1^\circ$ with
$v^\ast u_1^\circ$ and $v_2u_2$ and obtain a $d$-semiregular tree $G'$
with $d^\star_{G'}(v^\ast) = d^\star_{G}(v^\ast)-1\geq 2$ and
$d^\star_{G'}(v_2) = d^\star_{G}(v_2)+1=2$ while $d^\star(x)$ remains
unchanged for all other non-pendant vertices $x$. Hence the number of
buds and consequently the number of proper branches is by reduced by 1. 
We call such a rearrangement a \emph{branch reduction} for $G$ with
\emph{reduction point} $v^\ast$.
We call the set of vertices in $B_{v^\ast u_2}\cup B_{v^\ast x_1}$
the \emph{fork} of the branch reduction.
A branch reduction is called \emph{minimal} if its fork is minimal
among all possible branch reductions.

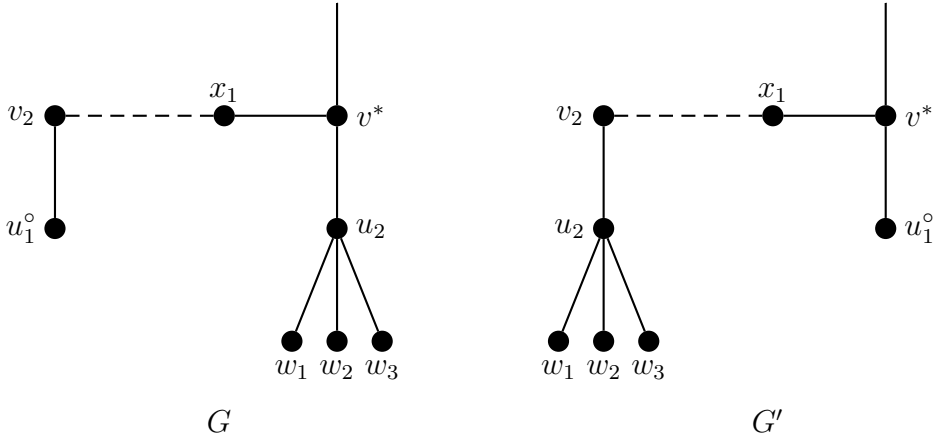
\begin{figure}[t]
  \begin{tabular*}{\textwidth}{@{\extracolsep{\fill}} cc}
    {
      \psset{xunit=15mm}
      \psset{yunit=15mm}
      \begin{pspicture}(-0.5,-1.3)(3.5,2)
        \cnode*(2.5,2){0pt}{z}
        \vertex{v2}{0,1}       \rput(-0.3,1){$v_2$}
        \vertex{x1}{1.5,1}     \rput( 1.5,1.2){$x_1$}
        \vertex{v1}{2.5,1}     \rput( 2.8,1){$v^\ast$}
        \vertex{u1}{0,0}       \rput(-0.3,0){$u_1^\circ$}
        \vertex{u2}{2.5,0}     \rput( 2.8,0){$u_2$}
        \vertex{w1}{2.1,-1}    \rput(2.1,-1.25){$w_1$}
        \vertex{w2}{2.5,-1}    \rput(2.5,-1.25){$w_2$}
        \vertex{w3}{2.9,-1}    \rput(2.9,-1.25){$w_3$}
        \ncline[linestyle=dashed,nodesep=0pt]{-}{v2}{x1}
        \edge{x1}{v1}
        \edge{z}{v1}
        \edge{v2}{u1}
        \edge{v1}{u2}
        \edge{u2}{w1}
        \edge{u2}{w2}
        \edge{u2}{w3}
      \end{pspicture}
    }
    &
    {
      \psset{xunit=15mm}
      \psset{yunit=15mm}
      \begin{pspicture}(-0.5,-1.3)(3.5,2)
        \cnode*(2.5,2){0pt}{z}
        \vertex{v2}{0,1}       \rput(-0.3,1){$v_2$}
        \vertex{x1}{1.5,1}     \rput( 1.5,1.2){$x_1$}
        \vertex{v1}{2.5,1}     \rput( 2.8,1){$v^\ast$}
        \vertex{u1}{2.5,0}     \rput( 2.8,0){$u_1^\circ$}
        \vertex{u2}{0,0}       \rput(-0.3,0){$u_2$}
        \vertex{w1}{-0.4,-1}   \rput(-0.4,-1.25){$w_1$}
        \vertex{w2}{0,-1}      \rput(0,-1.25){$w_2$}
        \vertex{w3}{0.4,-1}    \rput(0.4,-1.25){$w_3$}
        \ncline[linestyle=dashed,nodesep=0pt]{-}{v2}{x1}
        \edge{x1}{v1}
        \edge{z}{v1}
        \edge{v2}{u2}
        \edge{v1}{u1}
        \edge{u2}{w1}
        \edge{u2}{w2}
        \edge{u2}{w3}
      \end{pspicture}
    }
    \\
    $G$ & $G'$
  \end{tabular*}
  \caption{Branch reduction: branch $B_{v^\ast u_2}$ in $G$ has been
    replaced by a leaf in $G'$.
    (Dashed lines are paths in $G$ and $G'$, respectively, and need not
    be edges. Further details omitted.)
  }
  \label{fig:branchreduction}
\end{figure}

We can repeat such steps until a caterpillar remains. Thus we arrive
at the following 
\begin{lem}
  For every tree $G\in\mathcal{T}_{d,n}$ there exists a sequence of
  branch reductions
  \begin{equation}
    \label{eq:seq-reductions}
    G=G_t\rightarrow G_{t-1}\rightarrow 
    \dots \rightarrow G_1 \rightarrow G_0 = C_{d,n}
  \end{equation}
  that transforms $G$ into caterpillar $C_{d,n}$.
\end{lem}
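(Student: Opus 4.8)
The plan is to turn the single branch reduction described above into a terminating procedure by a finite–descent argument. For $G\in\mathcal{T}_{d,n}$ let $b(G)$ be the number of buds of $G$; equivalently, $b(G)$ is the number of leaves of the subtree $T^\star$ of $G$ induced by all non-pendant vertices. This is a non-negative integer, and it is elementary that $b(G)\leq 2$ precisely when $T^\star$ is a path (or a single vertex) --- a finite tree with at most two leaves is a path --- that is, precisely when $G$ is a caterpillar; in that case $G=C_{d,n}$, since $C_{d,n}$ is the unique caterpillar in $\mathcal{T}_{d,n}$.

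I would then argue by induction on $b(G)$. If $b(G)\leq 2$ then $G$ has no branching point, hence $G=C_{d,n}$ and the empty chain ($t=0$) suffices. If $b(G)\geq 3$ then $T^\star$ has at least three leaves and therefore a vertex of degree at least $3$, i.e.\ $G$ has a branching point $v^\ast$; performing a branch reduction at $v^\ast$ as constructed above yields a tree $G'\in\mathcal{T}_{d,n}$ in which the former bud $v_2$ now satisfies $d^\star_{G'}(v_2)=2$ while $d^\star$ is unchanged at every other non-pendant vertex, so $b(G')=b(G)-1<b(G)$. Applying the induction hypothesis to $G'$ gives a chain $G'=G_{t-1}\to\dots\to G_0=C_{d,n}$, and prepending the step $G=G_t\to G_{t-1}=G'$ produces the sequence (\ref{eq:seq-reductions}). (Non-inductively: keep applying arbitrary branch reductions; each step stays in $\mathcal{T}_{d,n}$ and strictly lowers the non-negative integer $b$, so the process stops after finitely many steps, and it can only stop at a tree with no branching point, i.e.\ at $C_{d,n}$.)

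I do not expect a genuine obstacle in this lemma: the single-step switching, the fact that every branching point carries the two proper branches needed to carry it out, and the fact that one reduction kills exactly one bud are all supplied by the discussion preceding the statement, while termination is just well-ordering of the non-negative integers. The only points needing a moment's care are the two equivalences used in the base case --- "$b(G)\leq 2$ iff $G$ is a caterpillar" and "a caterpillar in $\mathcal{T}_{d,n}$ equals $C_{d,n}$" --- and the verification that a branch reduction lowers $b$ by exactly one rather than by some other amount, which is immediate once one notes that only the values $d^\star(v^\ast)$ and $d^\star(v_2)$ change, and neither $v^\ast$ (with $d^\star\geq 2$ afterwards) nor $v_2$ (with $d^\star=2$ afterwards) is a bud of $G'$.
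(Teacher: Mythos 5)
Your descent on the number of buds is correct and is essentially the paper's own (largely unstated) argument: the text merely observes that each branch reduction lowers the bud count by exactly one and that the process therefore terminates at the unique caterpillar $C_{d,n}$. The one refinement worth adding to your induction step is that a branch reduction requires a branching point with \emph{two proper branches}, which not every branching point has (its branches may contain further branching points); this is repaired by taking $v^\ast$ to be a leaf of the subtree of $T^\star$ spanned by all branching points, for then at most one of its branches meets another branching point and hence at least $d^\star(v^\ast)-1\geq 2$ of them are proper.
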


The switchings of these branch reductions can be reverted.
Thus we obtain a sequence of graph rearrangements that transforms
$C_{d,n}$ back into tree $G$,
\[
C_{d,n} = G_0 \rightarrow G_1 \rightarrow 
\dots \rightarrow G_{t-1} \rightarrow G_t = G \;.
\]

Notice that caterpillar $C_{d,n}$ is symmetric about
either a central vertex $v_c$ or a central edge $e_c$ (depending
whether the number of vertices in the trunk is even or odd).
This also holds for Perron vector $f_0$, since otherwise we could
create a different Perron vector by reflecting the values of $f_0$ at
$v_c$ and $e_c$, respectively.

\begin{lem}
  \label{lem:caterpillar-Perron}
  The Perron vector $f_0$ of $C_{d,n}$ is unimodal with maximum in
  $v_c$ or $e_c$.
\end{lem}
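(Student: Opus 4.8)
The plan is to read ``unimodal with maximum in $v_c$ or $e_c$'' as the conjunction of two facts about the Perron vector $f_0$: (i) on any path leaving the trunk the value drops strictly at the last edge, and (ii) along the trunk $f_0$ is strictly increasing up to the centre and strictly decreasing afterwards, with the single possible exception that it is constant on the central edge $e_c$ when the trunk has even length. Fact (i) is immediate from the eigenvalue equation: a pendant $p$ adjacent to a trunk vertex $v_i$ satisfies $f_0(p)=f_0(v_i)/\mu<f_0(v_i)$ since $\mu:=\mu(C_{d,n})>1$; in particular the maximum of $f_0$ sits on the trunk. I would first dispose of the degenerate cases where the trunk has length $k\le 2$ (the star $K_{1,d}$ and the double star), for which $f_0$ is manifestly of the required form, and then assume $k\ge 3$.

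The crucial preliminary estimate is $\mu<1+\sqrt{d-1}$. I would obtain it by a Collatz--Wielandt test vector: set $x(v)=1$ on the trunk and $x(v)=1/\mu^\ast$ on all pendants, where $\mu^\ast:=1+\sqrt{d-1}$. Using $(\mu^\ast)^2=2\mu^\ast+(d-2)$ one checks by a short computation that $Ax\le\mu^\ast x$ with equality at every pendant and every interior trunk vertex, but with strict inequality at the two trunk endpoints $v_1,v_k$, where $(Ax)(v_1)=1+(d-1)/\mu^\ast<\mu^\ast$ because $(\mu^\ast)^2-\mu^\ast-(d-1)=\sqrt{d-1}>0$. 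Pairing $Ax\le\mu^\ast x$ with the strictly positive Perron vector $f_0$ gives $\mu\langle x,f_0\rangle=\langle Ax,f_0\rangle<\mu^\ast\langle x,f_0\rangle$, hence $\mu<\mu^\ast$. Since $t\mapsto t-(d-2)/t$ is increasing on $t>0$ and equals $2$ at $t=\mu^\ast$, this yields $c:=\mu-(d-2)/\mu<2$.

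For fact (ii), write $g_i=f_0(v_i)$ for the trunk vertices $v_1\dots v_k$. The eigenvalue equation at an interior trunk vertex $v_i$ reads $\mu g_i=g_{i-1}+g_{i+1}+(d-2)g_i/\mu$, i.e.\ $g_{i-1}+g_{i+1}=c\,g_i$ for $2\le i\le k-1$, and by the symmetry of $C_{d,n}$ and of $f_0$ about $v_c$ or $e_c$ (already noted above) one has $g_i=g_{k+1-i}$. Now consider the differences $\delta_i=g_{i+1}-g_i$: the recursion gives $\delta_i-\delta_{i-1}=g_{i+1}-2g_i+g_{i-1}=(c-2)g_i<0$, so $\delta_1>\delta_2>\dots>\delta_{k-1}$, while symmetry gives $\delta_i=-\delta_{k-i}$. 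A strictly decreasing, anti-symmetric finite sequence is strictly positive on its first half, strictly negative on its second half, and vanishes only at its midpoint and only when $k$ is even. Hence $g$ is strictly increasing up to $v_c$ and strictly decreasing thereafter when $k$ is odd, and strictly increasing up to $v_{k/2}$, constant exactly on $e_c$, and strictly decreasing thereafter when $k$ is even. Together with fact (i) this is precisely the assertion that $f_0$ is unimodal with maximum in $v_c$ or $e_c$.

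The main obstacle is the estimate $\mu(C_{d,n})<1+\sqrt{d-1}$, equivalently $c<2$: this is exactly what makes the second difference $\delta_i-\delta_{i-1}=(c-2)g_i$ negative and hence forces $g$ to be ``concave'' along the trunk with its peak at the centre. Were $c\ge 2$, the recursion would be hyperbolic, $g$ would be convex along the trunk, and the conclusion would in fact fail, with the peak sitting at the ends. Everything else is routine bookkeeping once $c<2$ is available; the test-vector argument above is the most economical route to it, though one could alternatively compare $C_{d,n}$ with a sufficiently long finite piece of the infinite $d$-semiregular caterpillar, whose index is $1+\sqrt{d-1}$.
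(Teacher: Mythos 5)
Your proof is correct, but it reaches the one non-trivial ingredient --- the inequality $c:=\mu-(d-2)/\mu<2$ --- by a genuinely different route than the paper. The paper extracts $c<2$ from the Perron vector itself: since every pendant is a strict local minimum, $f_0$ attains its maximum at a trunk vertex $v_j$ that has a strictly smaller neighbour, and the three-term relation $c\,f_0(v_j)=f_0(v_{j-1})+f_0(v_{j+1})<2f_0(v_j)$ at that vertex immediately gives $c<2$; unimodality then follows by contradiction, since a saddle point on the trunk would force $c\ge 2$. You instead prove the a priori bound $\mu(C_{d,n})<1+\sqrt{d-1}$ by a Collatz--Wielandt test vector (your verification checks out: equality of $Ax$ and $\mu^\ast x$ at pendants and interior trunk vertices, strict inequality at the trunk endpoints, then pairing with the positive Perron vector), and convert it to $c<2$ via monotonicity of $t\mapsto t-(d-2)/t$. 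This costs more computation but buys a quantitative, $n$-independent bound --- the index of the infinite $d$-semiregular caterpillar --- and makes visible \emph{why} the recursion is elliptic rather than hyperbolic. Your second half is also a direct argument (strictly decreasing, anti-symmetric difference sequence $\delta_i$) rather than the paper's proof by contradiction, and it yields slightly sharper information: constancy can occur only on $e_c$ and only when the trunk has evenly many vertices. Both proofs lean on the reflection symmetry of $f_0$, which the paper establishes just before the lemma, so your appeal to it is legitimate. No gaps.
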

\begin{proof}
  Let $v_1,\dots,v_k$ denote the non-pendant vertices of $C_{d,n}$
  such that $v_i\sim v_{i+1}$, and let $v_0\sim v_1$ and $v_{k+1}\sim
  v_k$ be two pendant vertices.
  By (\ref{eq:eveq}) we find $\mu f_0(v_i^\circ) = f_0(v_i)$ for
  all pendant vertices $v_i^\circ$ adjacent to $v_i$ and thus
  \[
  \left(\mu-\frac{d-2}{\mu}\right) f_0(v_i) 
  = f_0(v_{i-1}) + f_0(v_{i+1})
  \qquad\mbox{for all $i=1,\dots,k$.}
  \]
  Since $f_0$ must obtain its maximum on the trunk, there is some
  vertex $v_j$ that satisfies
  $\left(\mu-\frac{d-2}{\mu}\right) f_0(v_j) 
  = f_0(v_{j-1}) + f_0(v_{j+1}) < 2 f_0(v_j)$, and hence
  $\left(\mu-\frac{d-2}{\mu}\right) < 2$.
  Now suppose $f_0$ is not strictly monotone on a path starting
  at a maximum of $f_0$. Then there exists a saddle point $v_s$ of
  $f_0$, that is,
  $\left(\mu-\frac{d-2}{\mu}\right) f_0(v_s) 
  = f_0(v_{s-1}) + f_0(v_{s+1}) \geq 2 f_0(v_s)$, and thus
  $\left(\mu-\frac{d-2}{\mu}\right) \geq 2$, 
  a contradiction.
\end{proof}

Now let $C_{d,n} = G_0 \rightarrow G_1$ be the inverse of the last
branch reduction in sequence (\ref{eq:seq-reductions}) with reduction
point $v^\ast$. Then $G_1$ has three proper branches $B_{v^\ast v_1}$, 
$B_{v^\ast v_2}$, and $B_{v^\ast v_3}$ with respective lengths
$\ell_1\geq\ell_2\geq\ell_3$.

\begin{lem}
  \label{lem:G1}
  Let $k$ denote the number of non-pendant vertices of $C_{d,n}$.
  Assume that no proper branch of $G_1$ contains more trunk vertices
  than the union of the remaining two branches, i.e.,
  $\ell(B_{v^\ast v_i})\leq\lceil\frac{k+1}{2}\rceil$ for all
  proper branches of $G_1$.
  Then there exists a unimodal function $f_1$ on $G_1$ with maximum in
  branching point $v^\ast$ such that
  $\mathcal{R}_{G_1}(f_1)\geq\mathcal{R}_{G_0}(f_0)=\mu(C_{d,n})$.
\end{lem}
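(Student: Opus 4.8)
\emph{Proof proposal.}
The plan is to build $f_1$ directly from the Perron vector $f_0$ of $C_{d,n}$ rather than by switchings, since the branching point $v^\ast$ need not sit at the centre of $C_{d,n}$, so the maximum of $f_1$ cannot simply be inherited. Let $g_1>g_2>\cdots$ be the distinct values of $f_0$ on the trunk $v_1,\dots,v_k$ of $C_{d,n}$, ordered decreasingly; by Lemma~\ref{lem:caterpillar-Perron}, $g_1$ is the value of $f_0$ at the centre and $g_j$ its value at trunk-distance $j-1$ from the centre. Since every proper branch satisfies $\ell(B_{v^\ast v_i})\le\lceil\frac{k+1}{2}\rceil$, the profile $g_1,g_2,\dots,g_{\ell_i}$ never runs out of values, and I would define $f_1$ on $G_1$ by $f_1(v^\ast)=g_1$, by $f_1(x)=g_{j+1}$ for a non-pendant vertex $x$ at distance $j\ge1$ from $v^\ast$, and by $f_1(y)=f_1(x)/\mu$ for each pendant $y$ adjacent to a non-pendant vertex $x$, where $\mu=\mu(C_{d,n})$. (When $k$ is even the centre is an edge on which $f_0$ is constant, so one branch instead carries the shifted profile $g_1,g_1,g_2,\dots$ with a single flat edge at $v^\ast$; everything else is the same.) Because $g_1>g_2>\cdots$ and $\mu>1$, this $f_1$ is non-increasing on every path issuing from $v^\ast$, strictly so except possibly on that one flat edge, hence unimodal with maximum $v^\ast$.

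The comparison of Rayleigh quotients then rests on the observation that $f_1$ satisfies the eigenvalue equation $\mu f_1(v)=\sum_{u\sim v}f_1(u)$ of $C_{d,n}$ at every vertex whose neighbourhood in $G_1$ is a copy of the corresponding one in $C_{d,n}$: this is automatic at every pendant, and at every non-pendant trunk vertex of $G_1$ other than $v^\ast$ itself (three non-pendant neighbours and only $d-3$ pendants) and the bud of each branch $B_{v^\ast v_i}$ with $\ell_i<\lceil\frac{k+1}{2}\rceil$ (one non-pendant neighbour missing, compensated by one extra pendant). Writing $W$ for this finite exceptional set, a direct computation gives
\[
  \mathcal{R}_{G_1}(f_1)=\mu+\frac{1}{\langle f_1,f_1\rangle}\sum_{v\in W}f_1(v)\Bigl(\textstyle\sum_{u\sim v}f_1(u)-\mu f_1(v)\Bigr),
\]
so it suffices that the sum on the right be nonnegative. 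Using the trunk recursion $(\mu-\tfrac{d-2}{\mu})g_j=g_{j-1}+g_{j+1}$ of $f_0$, the $v^\ast$-term reduces to $g_1\bigl(g_2-\tfrac{g_1}{\mu}\bigr)$, which is positive because $\mu g_2>g_1$ (the trunk vertex at distance $1$ has a neighbour of value $g_1$ and further neighbours of positive value); and the term at the bud of a short branch $B_{v^\ast v_i}$ reduces to $-\,g_{\ell_i}\bigl(g_{\ell_i+1}-\tfrac{g_{\ell_i}}{\mu}\bigr)$, which is negative for the same reason.

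So the lemma reduces to the scalar inequality
\[
  g_1\Bigl(g_2-\tfrac{g_1}{\mu}\Bigr)\;\ge\;\sum_{i\,:\,\ell_i<\lceil(k+1)/2\rceil} g_{\ell_i}\Bigl(g_{\ell_i+1}-\tfrac{g_{\ell_i}}{\mu}\Bigr),
\]
a sum of at most three terms whose indices satisfy $\ell_1+\ell_2+\ell_3=k+2$ with each $\ell_i\le\lceil\frac{k+1}{2}\rceil$. I expect this to be the real obstacle: it is attained with equality as soon as one branch has the maximal length $\lceil\frac{k+1}{2}\rceil$, so no crude estimate can succeed. The plan is to rewrite each right-hand summand via the recursion as $g_{\ell_i}\bigl((\mu-\tfrac{d-1}{\mu})g_{\ell_i}-g_{\ell_i-1}\bigr)$, and then to use the monotonicity of $j\mapsto g_j$, the inequality $\mu-\tfrac{d-2}{\mu}<2$ established in Lemma~\ref{lem:caterpillar-Perron}, and the constraint $\sum_i\ell_i=k+2$ (which forces the branch lengths to be spread out, so the three right-hand terms cannot all be as large as the term at index $1$) to bound the sum term by term; cross-checking against the Rayleigh-quotient identity for $C_{d,n}$ itself keeps the bookkeeping under control. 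Once this is done, $\mathcal{R}_{G_1}(f_1)\ge\mu=\mathcal{R}_{G_0}(f_0)$ follows, the case $k$ even being handled by the same computation with the shifted profile; the equality analysis moreover shows the bound is tight only when some $\ell_i=\lceil\frac{k+1}{2}\rceil$, which is precisely the situation to be excluded when this lemma is later used to obtain strict inequality.
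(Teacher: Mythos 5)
Your construction is genuinely different from the paper's: you define $f_1$ explicitly from the distance profile of $f_0$ and compare Rayleigh quotients through the residual identity
\[
  \mathcal{R}_{G_1}(f_1)-\mu=\frac{1}{\langle f_1,f_1\rangle}\sum_{v\in W}f_1(v)\Bigl(\textstyle\sum_{u\sim v}f_1(u)-\mu f_1(v)\Bigr),
\]
whereas the paper never writes down such a scalar comparison at all: it realizes $G_1$ by a spiral sequence of edge switchings starting at the centre of $C_{d,n}$, keeps $f_1$ a \emph{rearrangement} of the values of $f_0$, and invokes Lemma~\ref{lem:rearrange} at each step so that the Rayleigh quotient can only increase. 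Your setup up to the displayed scalar inequality checks out (for $k$ odd): the exceptional set $W$ is correctly identified, and the residuals at $v^\ast$ and at the buds of the short branches reduce, via the trunk recursion, to $g_1(g_2-g_1/\mu)$ and $-g_{\ell_i}(g_{\ell_i+1}-g_{\ell_i}/\mu)$ as you state.

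However, the proof has a genuine gap, and you name it yourself: the inequality
\[
  g_1\Bigl(g_2-\tfrac{g_1}{\mu}\Bigr)\;\ge\;\sum_{i:\,\ell_i<\lceil(k+1)/2\rceil} g_{\ell_i}\Bigl(g_{\ell_i+1}-\tfrac{g_{\ell_i}}{\mu}\Bigr)
\]
is never proved; you only describe a ``plan'' to bound the right side term by term using the monotonicity of $g_j$ and $\mu-\frac{d-2}{\mu}<2$. This inequality is the entire mathematical content of the lemma in your formulation, and it is not routine: numerically (e.g.\ $d=3$, $k=7$) the two sides are essentially equal for the branch profile $(4,3,2)$, so, as you yourself observe, no term-by-term or crude monotonicity estimate can close it, and the sketched strategy contradicts that observation. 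Note also that the truth of the lemma does not guarantee the truth of this inequality, since your $f_1$ is a specific test function (with a different value multiset from $f_0$) that might in principle fail even when some other unimodal $f_1$ succeeds. Finally, the $k$ even case is waved off with ``everything else is the same,'' although the shifted profile changes both the residual at $v^\ast$ and the recursion on the flat edge, so the identity and the target inequality would have to be re-derived there. As it stands, the argument reduces the lemma to an unproven (and delicate) inequality rather than proving it; the paper's switching argument is precisely the device that avoids having to establish any such inequality.
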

\begin{proof}
  Let $v_0$ be either $v_c$ or incident to $e_c$. 
  By symmetry and Lemma~\ref{lem:caterpillar-Perron},
  $v_0$ is a maximum of $f_0$ and $C_{d,n}$ has two branches
  $B_o=B_{v_0 v_1}$ and $B_e=B_{v_0 v_2}$ of length
  $\ell_o=\lceil\frac{k+1}{2}\rceil$ and
  $\ell_e=\lfloor\frac{k+1}{2}\rfloor$, respectively.
  Let $v_1,\ldots,v_k$ denote the remaining trunk vertices of
  $C_{d,n}$, enumerated such that $f_0(v_i)\geq f_0(v_{i+1})$ 
  for all $i=0,\ldots,k-1$ and all vertices with odd (even) index
  belong to $B_o$ ($B_e$).
  By Lemma~\ref{lem:caterpillar-Perron}, $f_0(v_i)>f_0(v_{i+2})$ for all
  $i=1,\ldots,k-2$. \\
  Now we rearrange the vertices of $G_0=C_{d,n}$ in a spiral-like way
  to obtain $G_1$:

  \begin{enumerate}[1.]
  \item 
    Switch edges $v_0 u_0^\circ$ and $v_1 v_3$ with $v_0 v_3$ and 
    $v_1 u_0^\circ$, where $u_0^\circ\sim v_0$ is a pendant vertex.
    By Lemma~\ref{lem:rearrange}, we obtain a tree
    $T_1\in\mathcal{T}_{d,n}$ and a unimodular function $g_1$
    on $T_1$ with $\mathcal{R}_{T_1}(g_1)\geq\mathcal{R}_{G_0}(f_0)$.

  \item 
    Start with $S=\{1,2,3\}$ and $R=\{4,5,\ldots,k\}$.

  \item 
    Let $i$ and $m$ be the least indices in $S$ and $R$, respectively,
    and $j$ be the least index in $S\setminus\{i\}$.
    Then $v_j\sim v_m$ and $g_i(v_i)\geq g_i(v_j)$.
    Let $l_1$, $l_2$, and $l_3$ be the length of the branches
    $B_{v_0 v_1}$, $B_{v_0 v_2}$, and $B_{v_0 v_3}$ in $T_i$.

  \item 
    If $\{l_1,l_2,l_3\}=\{\ell_1,\ell_2,\ell_3\}$, 
    then set $f_1=g_i$ and stop.

  \item 
    If $l_b=\ell_1$ for some $b\in\{1,2,3\}$, then remove the indices
    of the corresponding vertices from $S$ and $R$ and goto Step~3.
    
  \item 
    Switch edges $v_i u_i^\circ$ and $v_j v_m$ with $v_i v_m$ and 
    $v_j u_i^\circ$, where $u_i^\circ\sim v_i$ is a pendant vertex.
    By Lemma~\ref{lem:rearrange}, we obtain a tree
    $T_j\in\mathcal{T}_{d,n}$ and a unimodular function $g_j$
    on $T_j$ with $\mathcal{R}_{T_j}(g_j)\geq\mathcal{R}_{T_i}(g_i)$.
    
  \item 
    Replace $S\leftarrow (S\cup\{m\})\setminus\{i\}$ and
    $R\leftarrow R\setminus\{m\}$ and goto Step~3.
  \end{enumerate}

  It is straightforward to show that this procedure creates $G_1$ and
  that $\mathcal{R}_{G_1}(f_1)\geq\mathcal{R}_{G_0}(f_0)$.
\end{proof}

All remaining steps in sequence (\ref{eq:seq-reductions}) are simpler
to handle.

\begin{lem}
  \label{lem:Gi}
  Let $G_i\rightarrow G_{i+1}$ be the inverse of a branch reduction in
  sequence (\ref{eq:seq-reductions}) with reduction point $v^\ast$,
  for an $i=1,\dots,t-1$.
  Assume $f_i$ is a unimodal function on $G_i$ such that its maximum
  $\hat{v}$ is either in $v^\ast$ or not contained in the fork of
  the branch reduction.
  Then there exists a unimodal function $f_{i+1}$ in
  $G_{i+1}$ with maximum $\hat{v}$ and 
  $\mathcal{R}_{G_{i+1}}(f_{i+1})\geq\mathcal{R}_{G_i}(f_i)$.
\end{lem}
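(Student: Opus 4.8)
The plan is to mimic the spiral construction of Lemma~\ref{lem:G1} but in the much easier situation where the newly created proper branch is attached to a reduction point $v^\ast$ that, together with its whole fork, lies entirely below the maximum $\hat v$ of $f_i$ (or else $v^\ast=\hat v$). First I would set up notation exactly as in the branch reduction: the inverse of the reduction takes a leaf hanging at $v^\ast$ and grows it into a proper branch $B_{v^\ast u_2}$, by a sequence of switchings of the type in Fig.~\ref{fig:switching}, each time moving a pendant vertex from somewhere along the existing part of that branch down to the next trunk position. Because $v^\ast$ is either $\hat v$ itself or sits strictly below $\hat v$, unimodality of $f_i$ gives us that $f_i$ is monotonically non-increasing along the path from $\hat v$ into the fork, so at every switching step the hypothesis $f(v_1)\geq f(v_2)$ of Lemma~\ref{lem:rearrange} is automatically satisfied with $v_1$ the vertex closer to $\hat v$.

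The key steps, in order, are: (i) enumerate the trunk vertices $y_1,\dots,y_s$ of the old branch (the one that is a single bud in $G_i$ and must become longer, i.e.\ the branch whose length must be increased so that the three proper branches at $v^\ast$ acquire the prescribed lengths $\ell_1\geq\ell_2\geq\ell_3$), together with the positions into which pendant vertices must be fed; (ii) apply Lemma~\ref{lem:rearrange} repeatedly, each switching moving the appropriate pendant vertex one step further down the growing branch, at every step verifying the orientation condition $f(v_1)\geq f(v_2)$ from unimodality; (iii) observe that each switching preserves unimodality with the same maximum $\hat v$ (Lemma~\ref{lem:rearrange} guarantees this, provided $\hat v$ is not displaced, which holds since none of the switched edges is incident to $\hat v$ in a way that could create a new maximum — here we use precisely that $\hat v$ is outside the fork or equal to $v^\ast$); (iv) collect the inequalities $\mathcal R_{T_{i,j+1}}(g_{j+1})\geq\mathcal R_{T_{i,j}}(g_j)$ into the chain $\mathcal R_{G_{i+1}}(f_{i+1})\geq\mathcal R_{G_i}(f_i)$, and set $f_{i+1}$ to be the function obtained after the last switching. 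The point that makes this lemma strictly easier than Lemma~\ref{lem:G1} is that here there is no ``spiral'' bookkeeping of three competing branch lengths: only one branch is being grown, from length $1$ (a single leaf) to its target, so the switchings can be applied in the obvious linear order from $v^\ast$ outward.

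The main obstacle — really the only delicate point — is confirming that the orientation hypothesis of Lemma~\ref{lem:rearrange} holds at \emph{every} switching, i.e.\ that the vertex $v_1$ (from which the pendant vertex is detached and which is the one nearer to $v^\ast$, hence nearer to $\hat v$) always carries an $f$-value at least as large as $v_2$ (the next trunk vertex deeper in the branch, to which a pendant vertex was previously attached). This follows from unimodality of the current function together with the fact that both $v_1$ and $v_2$ lie on a common path emanating from $\hat v$ with $v_1$ preceding $v_2$ on it; but one has to check that the earlier switchings in the sequence have not disturbed this monotonicity along that path. Since Lemma~\ref{lem:rearrange} explicitly asserts that the resulting $f'$ is again unimodal with the same maximum, this is maintained inductively, and the argument closes. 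I would therefore phrase the proof as a short induction on the number of switchings needed to carry out the inverse branch reduction, invoking Lemma~\ref{lem:rearrange} at each step and noting that unimodality and the location of the maximum are preserved, which is exactly what allows the next step to go through.
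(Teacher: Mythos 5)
There is a genuine gap, and it stems from a misreading of what the inverse of a branch reduction is. In the paper's construction a branch reduction (and hence its inverse) is a \emph{single} switching: the entire subtree rooted at $u_2$ --- that is, the whole branch $B_{v^\ast u_2}$ --- is moved in one step between the reduction point $v^\ast$ and the bud $v_2$ of the other proper branch, in exchange for one pendant vertex $u_1^\circ$. Consequently the intended proof is a one-shot application of Lemma~\ref{lem:rearrange} with $v_1=v^\ast$: since $\hat v$ equals $v^\ast$ or lies outside the fork, every path from $\hat v$ into the fork passes through $v^\ast$, so unimodality gives that $f_i$ restricted to the fork attains its maximum at $v^\ast$; in particular $f_i(v^\ast)\geq f_i(v_2)$, the hypothesis of Lemma~\ref{lem:rearrange} holds, and unimodality with the same maximum is preserved. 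Nothing iterative is needed.

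Your proposal instead grows the new branch one trunk position at a time by a chain of switchings, and this version does not go through. First, the mechanics are off: a switching moves one pendant vertex and one subtree, and ``moving a pendant vertex \ldots to the next trunk position'' cannot increase the length (number of non-pendant vertices) of a branch, so the sequence of operations you describe does not produce $G_{i+1}$ from $G_i$; in any case it is not the operation appearing in sequence~(\ref{eq:seq-reductions}), so you would be proving a statement about different intermediate graphs. Second, and more seriously, the orientation hypothesis $f(v_1)\geq f(v_2)$ that you identify as ``the only delicate point'' genuinely fails in a step-by-step growth: after the first step, $v_1$ is the tip of the partially grown branch (carrying a small value inherited from the far end of the donor branch) while $v_2$ sits in the interior of the donor branch; these two vertices do \emph{not} lie on a common path emanating from $\hat v$ (the paths separate at $v^\ast$), so unimodality gives no inequality between them --- indeed one expects $f(v_1)<f(v_2)$ there. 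This is exactly the difficulty that forces the careful spiral bookkeeping in Lemma~\ref{lem:G1}; it does not disappear here by itself, it disappears because the whole subtree is transplanted in a single switching anchored at $v^\ast$, where the needed inequality is immediate.
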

\begin{proof}
  The inverse of the branch reduction is performed by switching
  edges $v^\ast u_1^\circ$ and $v_2 u_2$ with
  edges $v^\ast u_2$ and $v_2 u_1^\circ$, see
  Fig.~\ref{fig:branchreduction}.
  From unimodality we can conclude that $f_i$ restricted to 
  the fork of the branch reduction, 
  $B_{v^\ast u_2}\cup B_{v^\ast x_1}$, attains its maximum in
  $v^\ast$. In particular we have $f_i(v^\ast)\geq f_i(v_2)$.
  Hence the assumptions of Lemma~\ref{lem:rearrange} hold and the
  result follows.
\end{proof}
Notice that the condition of Lemma~\ref{lem:Gi} is always satisfied
when $f_i$ attains it maximum in a branching point of $G_i$.

\begin{proof}[ of Theorem~\ref{thm:min-adj}]
  Suppose that $G$ is not a caterpillar.
  Let $C_{d,n} = G_0 \rightarrow G_1 \rightarrow \dots \rightarrow
  G_{t-1} \rightarrow G_t = G$ be a sequence of inverses of
  \emph{minimal} branch reductions.
  Let $k$ again denote the number of non-pendant vertices of
  $C_{d,n}$. Assume first that the longest branch in $G_1$ has length 
  $\ell\leq\lceil\frac{k+1}{2}\rceil$. Then by Lemma~\ref{lem:G1} we
  can construct a unimodal function $f_1$ on $G_1$ which 
  attains its maximum in the branching point.
  By applying Lemma~\ref{lem:Gi} for all remaining inverse branch
  reductions we get a unimodal function $f$ on $G$ with
  $\mathcal{R}_G(f)\geq\mu(C_{d,n})$. 

  Assume now that there is a proper branch in $G_1$ with length
  $\ell>\lceil\frac{k+1}{2}\rceil$.
  Then the fork of the minimal branch reduction contains less than
  $\lfloor\frac{k+1}{2}\rfloor$ non-pendant vertices and thus
  $\hat{v}$ must be contained in the remaining branch of $G_1$.
  Hence by Lemma~\ref{lem:Gi} we get a unimodal function $f_1$ on
  $G_1$ where its maximum $\hat{v}$ is located on the longest proper
  branch of $G_1$. Notice that for all subsequent inverse minimal
  branch reductions $G_i\to G_{i+1}$, each fork must have less than
  $\lfloor\frac{k+1}{2}\rfloor$ non-pendant vertices and thus cannot
  contain maximum $\hat{v}$. Therefore we find a unimodal function
  $f$ on $G$ with $\mathcal{R}_G(f)\geq\mu(C_{d,n})$ by Lemma~\ref{lem:Gi}.

  At last we have to note that equality
  $\mathcal{R}_G(f)=\mu(C_{d,n})$ only holds if none of the
  inequalities in Lemmata~\ref{lem:rearrange} and \ref{lem:G1} is
  strict, which implies that $f_0$ is constant on $C_{d,n}$, a
  contradiction to Lemma~\ref{lem:caterpillar-Perron}.
\end{proof}


\section{Non-semiregular trees}
\label{sec:remarks}
\label{sec:strangeexamples}

Let $\mathcal{T}_\pi$ denote the class of trees with degree sequence
$\pi$. Then we can again ask for the structure of trees with minimal
index in $\mathcal{T}_\pi$. The na{\"\i}ve conjecture states:
\emph{If a tree $G$ has minimal index in class $\mathcal{T}_\pi$, then
  $G$ is a caterpillar.}
Unfortunately, computational experiments have shown that this
conjecture is false.
We performed an exhaustive search on trees on up to 20 vertices using
\emph{Wolfram's Mathematica} and Royle's \emph{Combinatorial Catalogues}
\citep{Royle:2009x} and found several counter examples, see
Figure~\ref{fig:counterexamples}.

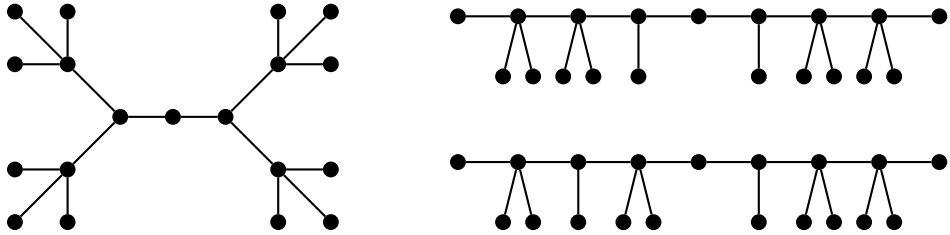
\begin{figure}[ht]
  \centering
  \vspace*{1ex}
  {
    \psset{xunit=7mm}
    \psset{yunit=7mm}
    \begin{pspicture}(-3,-2)(3,2)
      \svertex{1}{-1,0}
      \svertex{2}{-0,0}
      \svertex{3}{1,0}
      \svertex{11}{-2,1}
      \svertex{12}{-2,-1}
      \svertex{31}{2,1}
      \svertex{32}{2,-1}
      \svertex{111}{-3,1}
      \svertex{112}{-3,2}
      \svertex{113}{-2,2}
      \svertex{121}{-3,-1}
      \svertex{122}{-3,-2}
      \svertex{123}{-2,-2}
      \svertex{311}{3,1}
      \svertex{312}{3,2}
      \svertex{313}{2,2}
      \svertex{321}{3,-1}
      \svertex{322}{3,-2}
      \svertex{323}{2,-2}
      \edge{1}{2}
      \edge{2}{3}
      \edge{1}{11}
      \edge{1}{12}
      \edge{3}{31}
      \edge{3}{32}
      \edge{11}{111}
      \edge{11}{112}
      \edge{11}{113}
      \edge{12}{121}
      \edge{12}{122}
      \edge{12}{123}
      \edge{31}{311}
      \edge{31}{312}
      \edge{31}{313}
      \edge{32}{321}
      \edge{32}{322}
      \edge{32}{323}
    \end{pspicture}
  }
  \hspace*{1cm}
  \begin{tabular}[b]{@{}c@{}}
    {
      \psset{xunit=8mm}
      \psset{yunit=8mm}
      \begin{pspicture}(-4,-1)(4,0)
        \svertex{1}{-3,0}
        \svertex{2}{-2,0}
        \svertex{3}{-1,0}
        \svertex{4}{0,0}
        \svertex{5}{1,0}
        \svertex{6}{2,0}
        \svertex{7}{3,0}
        \svertex{13}{-4,0}
        \svertex{11}{-3.25,-1}
        \svertex{12}{-2.75,-1}
        \svertex{21}{-2.25,-1}
        \svertex{22}{-1.75,-1}
        \svertex{31}{-1,-1}
        \svertex{51}{1,-1}
        \svertex{61}{2.25,-1}
        \svertex{62}{1.75,-1}
        \svertex{73}{4,0}
        \svertex{71}{3.25,-1}
        \svertex{72}{2.75,-1}
        \edge{1}{2}
        \edge{2}{3}
        \edge{3}{4}
        \edge{4}{5}
        \edge{5}{6}
        \edge{6}{7}
        \edge{1}{11}
        \edge{1}{12}
        \edge{1}{13}
        \edge{2}{21}
        \edge{2}{22}
        \edge{3}{31}
        \edge{5}{51}
        \edge{6}{61}
        \edge{6}{62}
        \edge{7}{71}
        \edge{7}{72}
        \edge{7}{73}
      \end{pspicture}
    }
    \\[5ex]
    {
      \psset{xunit=8mm}
      \psset{yunit=8mm}
      \begin{pspicture}(-4,-1)(4,0)
        \svertex{1}{-3,0}
        \svertex{2}{-2,0}
        \svertex{3}{-1,0}
        \svertex{4}{0,0}
        \svertex{5}{1,0}
        \svertex{6}{2,0}
        \svertex{7}{3,0}
        \svertex{13}{-4,0}
        \svertex{11}{-3.25,-1}
        \svertex{12}{-2.75,-1}
        \svertex{21}{-2,-1}
        \svertex{31}{-1.25,-1}
        \svertex{32}{-0.75,-1}
        \svertex{51}{1,-1}
        \svertex{61}{2.25,-1}
        \svertex{62}{1.75,-1}
        \svertex{73}{4,0}
        \svertex{71}{3.25,-1}
        \svertex{72}{2.75,-1}
        \edge{1}{2}
        \edge{2}{3}
        \edge{3}{4}
        \edge{4}{5}
        \edge{5}{6}
        \edge{6}{7}
        \edge{1}{11}
        \edge{1}{12}
        \edge{1}{13}
        \edge{2}{21}
        \edge{3}{31}
        \edge{3}{32}
        \edge{5}{51}
        \edge{6}{61}
        \edge{6}{62}
        \edge{7}{71}
        \edge{7}{72}
        \edge{7}{73}
      \end{pspicture}
    }
  \end{tabular}
  \vspace*{1ex}
  \caption{Three of the extremal trees with degree sequence
    $\pi=(4^4,3^2,2,1^{12})$; all have spectral radius
    $\mu(G)=\sqrt{6}$.}
  \label{fig:counterexamples}
\end{figure}

Unfortunately we were not able to detect a general pattern.
Our observations could be summarized in the following way:
\begin{itemize}
\item Extremal trees need not be unique (up to isomorphism).
  Figure~\ref{fig:counterexamples} gives an example.
\item None of the extremal trees has to be a caterpillar.
\item Buds have largest degree in each proper branch of an extremal
  tree.
\item Degrees need not be monotone along the trunk of a proper
  branch.
\end{itemize}



\end{document}